\tikzset{vertex/.style={circle,draw,fill,inner sep=0pt,minimum size=1mm}}
\theoremstyle{plain}
\newtheorem{thm}{Theorem}
\newtheorem{lem}[thm]{Lemma}
\newtheorem{cor}[thm]{Corollary}
\newtheorem{conj}[thm]{Conjecture}
\theoremstyle{definition}
\newtheorem{definition}[thm]{Definition}
\newtheorem{exl}[thm]{Example}
\newtheorem{question}[thm]{Question}
\numberwithin{thm}{section}
\newcommand{\adj}{\leftrightarrow}
\newcommand{\adjeq}{\leftrightarroweq}
\newcommand{\pd}{\partial}
\def\Z{{\mathbb Z}}
\def\R{{\mathbb R}}
\title{A Borsuk-Ulam theorem for digital images}
\author{P.~Christopher Staecker}
\begin{document}

\bibliographystyle{plain}

\maketitle

\begin{abstract}
The Borsuk-Ulam theorem states that a continuous function $f:S^n \to \R^n$ has a point $x\in S^n$ with $f(x)=f(-x)$. We give an analogue of this theorem for digital images, which are modeled as discrete spaces of adjacent pixels equipped with $\Z^n$-valued functions. 

In particular, for a concrete two-dimensional rectangular digital image whose pixels all have an assigned ``brightness'' function, we prove that there must exist a pair of opposite boundary points whose brightnesses are approximately equal. This theorem applies generally to any integer-valued function on an abstract simple graph.

We also discuss generalizations to digital images of dimension 3 and higher. We give some partial results for higher dimensional images, and show a counter example which demonstrates that the full results obtained in lower dimensions cannot hold generally. 
\end{abstract}

\section{Introduction}
The Borsuk-Ulam is the following theorem in classical topology: If $f:S^n \to \R^n$ is continuous, then there is some point $x\in S^n$ such that $f(x)=f(-x)$, where $-x\in S^n$ is the antipodal point of $x$. The typical layman's statement of the Borsuk-Ulam theorem is that there is always a pair of opposite points on the surface of the Earth having the same temperature and barometric pressure. Since the theorem first appeared (proved by Borsuk) in the 1930s, many equivalent formulations, applications, alternate proofs, generalizations, and related ideas have been developed, and active work continues today. See the nice book \cite{mato03} by Matou\v{s}ek for a survey.

\begin{figure}
\[ \includegraphics{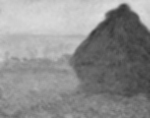} \]
\caption{Monet's \emph{Grainstack (Sunset)}. A Borsuk-Ulam Theorem would assert the existence of a pair of ``opposite'' pixels on the boundary having the same shade of gray.\label{haystackfig}}
\end{figure}

This paper aims to derive a similar theorem in the context of digital images which are defined by a discrete set of pixels. To get a feel for our aims, consider the grayscale digital image pictured in Figure \ref{haystackfig}, which measures $150\times 118$ pixels. The brightness of each pixel varies across the image, so we can consider the brightness as a function $f:B \to \Z$, where $B$ is the set of pixels in the image. The rectangular boundary $\pd B$ of the image is topologically analagous to $S^1$, and our brightness function naturally restricts to a function $f:\pd B \to \Z$. Then the classical Borsuk-Ulam theorem suggests that, subject to some continuity condition on $f$, there should be a pair of opposite points on the boundary having the same brightness. 

For digital images it turns out that we cannot in general expect opposite points with exactly the same brightness. We will revisit the image in Figure \ref{haystackfig} in Section \ref{exlsection} after we derive our results. 

Our approach uses the topological theory of digital images which has been developing since the 1980s. Several different models of ``digital topology'' have been proposed by various authors. We will use the graph-like theory of digital images and continuous functions based on adjacency relations. See Boxer \cite{boxe94} for a basic reference. We will review the basic ingredients here.

A \emph{digital image} is a pair $(X,\kappa)$, where $X$ is a set and $\kappa$ is a symmetric antireflexive relation on $X$, called the \emph{adjacency relation}. We sometimes refer elements of $X$ as \emph{pixels}, and the relation $\kappa$ tells us which pixels are adjacent to one another. When $a,b\in X$ and $a$ and $b$ are related by $\kappa$, we say $a$ and $b$ are \emph{$\kappa$-adjacent}, and we write $a \adj_\kappa b$. If the adjacency relation $\kappa$ is understood, we simply write $a\adj b$. If $a\adj b$ or $a=b$, we write $a\adjeq b$.

Typically it is assumed that $X\subset \Z^n$ for some $n$, and the adjacency relation $\kappa$ is a restriction of some globally defined adjacency relation on $\Z^n$. The most obvious applications occur in the case where $X$ is a finite subset of $\Z^2$ or $\Z^3$.

The topological theory of digital images has, to a large part, been characterized by taking ideas from classical topology and ``discretizing'' them. Typically $\R$ is replaced by $\Z$, and so on. 
Topology in $\R^n$ typically makes use of the standard topology on $\R^n$. 
Viewing $\Z$ as a digital image, it makes sense to use the following adjacency relation: $a,b\in \Z$ are called \emph{$c_1$-adjacent} if and only if $|a-b|=1$. This adjacency relation corresponds to connectivity in the standard topology of $\R$ (the notation ``$c_1$'' is explained below). 

Unfortunately there is no canonical ``standard adjacency'' to use in $\Z^n$ which corresponds naturally to the standard topology of $\R^n$. In the case of $\Z^2$, for example, at least two different adjacency relations seem reasonable: we can view $\Z^2$ as a rectangular lattice connected by the coordinate grid, so that each point is adjacent to 4 neighbors; or we can additionally allow diagonal adjacencies so that each point is adjacent to 8 neighbors. This is formalized in the following definition from \cite{han05} (though these adjacencies had been studied for many years earlier):

\begin{definition}
Let $k,n$ be positive integers with $k\le n$. Then define an adjacency relation $c_k$ on $\Z^n$ as follows: two points $x,y\in \Z^n$ are $c_k$-adjacent if their coordinates differ by at most 1 in at most $k$ positions, and are equal in all other positions.
\end{definition}

Thus $(\Z^2,c_1)$ is the two-dimensional integer lattice described above in which each point has 4 neighbors, and $(\Z^2,c_2)$ is the variation in which each point has 8 neighbors. The usual adjacency described above on $\Z$ is simply $c_1$. In $\Z^3$ the three relations $c_1,c_2,c_3$ differ with respect to which kinds of diagonal adjacencies are allowed: $c_1$ allows no diagonal adjacencies, $c_2$ allows ``face diagonals'' across unit squares, while $c_3$ additionally allows ``solid diagonals''. 

In this paper we focus on these $c_k$ relations. Almost all digital images in this paper will be subsets of $\Z^n$ for some $n$, with adjacency given by $c_k$ for some $k\le n$. Our results in Section \ref{dim1section} will hold for more general adjacency relations, and even digital images $X$ with no particular embedding in any $\Z^n$. Without restricting to subsets of $\Z^n$, the theory of digitial images resembles abstract graph theory (see \cite{hmps15}, which never assumes that the images are embedded in $\Z^n$).

Continuity of functions is a basic ingredient in a digital topological theory. The following definition is equivalent to one posed by Rosenfeld in \cite{rose86}:
\begin{definition}
Let $(X,\kappa)$ and $(Y,\lambda)$ be digital images, and let $f:X\to Y$ be a function. We say $f$ is \emph{$(\kappa,\lambda)$-continuous} (or simply \emph{continuous}, if the adjacencies are understood) if $a\adj_\kappa b$ implies $f(a)\adjeq_\lambda f(b)$. 
\end{definition}

We will also make use of the notion of connectedness. 
For $c,d\in \Z$  and $c<d$, let $[c,d]_\Z$ denote the set $\{c,c+1,\dots, d\}$. This set is called the \emph{digital interval} from $c$ to $d$.
Given two points $a,b\in (X,\kappa)$, a \emph{$\kappa$-path from $a$ to $b$} is a $(c_1,\kappa)$-continuous function $p: [0,n]_\Z \to X$ with $p(0)=a$ and $p(n)=b$. When the adjacency relation is understood, a $\kappa$-path is called simply a \emph{path}. 

A digital image $(X,\kappa)$ is \emph{$\kappa$-connected} (or simply \emph{connected}) when any two points of $X$ can be joined by a $\kappa$-path. A subset $S\subset X$ is a $\kappa$-connected component of $X$ when it includes all points which can be connected by a $\kappa$-path to its elements.

In many papers the concepts defined above are always prefixed by the word ``digital'' (e.g. ``digital path'', ``digitally continuous'', etc). This is usually unnecessary for us, but we will sometimes use the word for emphasis. 

The rest of this paper is organized as follows: in Section \ref{lipsection} we develop some necessary terminology and machinery for working with non-continuous functions. This material is used in Section \ref{dim1section} to obtain a Borsuk-Ulam theorem for $\Z$-valued functions on digital images. Our results in Section \ref{dim1section} generalize results by Boxer in \cite[Section 5]{boxe10}. In Section \ref{exlsection} we demonstrate a specific application to the Grainstack image. In Section \ref{higherdimsection} we give some partial results and state some open questions for higher dimensional images.

We would like to thank Lawrence Boxer for several comments and helpful references.

\section{Lipschitz constants for functions on digital images}\label{lipsection}
The proof of our Borsuk-Ulam theorem will require us to consider some functions which are not continuous. All functions we will consider do, however, have the following weaker property.

\begin{definition}
Let $(X,\kappa)$ and $(Y,\lambda)$ be digital images, and let $f:X\to Y$ be a function. We say $f$ has \emph{[$(\kappa,\lambda)$]-Lipschitz constant} $m\in \Z$ if $a\adj_\kappa b$ implies that there is a $\lambda$-path of length $m$ from $f(a)$ to $f(b)$.
\end{definition}
The terminology is motivated by the classical definiton of the Lipschitz constant, which would require that $\frac{d(f(a),f(b))}{d(a,b)} \le m$. In our case to avoid assuming a particular metric on $X$ we only refer to $\kappa$-adjacent points in which case it makes sense to interpret $d(a,b)=1$, and the distance from $f(a)$ to $f(b)$ is interpreted in terms of the length of a $\lambda$-path connecting them.

This notion corresponds to a Lipschitz condition given by Rosenfeld \cite[Section 5]{rose86}, which assumes that $X$ and $Y$ are metric spaces. Rosenfeld does not explore the idea very much, and we will require a bit more development. 

By definition a function is digitally continuous if and only if it has Lipschitz constant 1. Thus, unlike in classical analysis, the existence of a Lipschitz constant does not imply continuity. In fact in typical cases, any function (continuous or not) has a Lipschitz constant:
\begin{exl}
Let $f:(X,\kappa) \to (Y,\lambda)$ be any function (not necessarily continuous) such that $f(X)$ is a finite set contained in a single connected component of $Y$. In this case any two points of $f(X)$ are joined by a path in $Y$. Since $f(X)$ is finite, there must be some maximum length required to realize all of these paths. Let $m$ be this maximum, and then $f$ has Lipshitz constant $m$.
\end{exl}

For integer-valued functions $f:(X,\kappa) \to (\Z,c_1)$, it is easy to see that $f$ has Lipschitz constant $m$ if and only if $a\adjeq_\kappa b$ implies $|f(a)-f(b)|\le m$. 
Lipschitz constants for such functions behave predictably with respect to addition and scalar multiplication:
\begin{thm}\label{lipaddition}
Let $f,g:(X,\kappa) \to (\Z,c_1)$ have Lipschitz constants $m$ and $l$ respectively, and let $c\in \Z$. Then:
\begin{itemize}
\item $f+g:X\to \Z$ has Lipschitz constant $m+l$.
\item $cf:X\to \Z$ has Lipschitz constant $|c|m$. \qed
\end{itemize}
\end{thm}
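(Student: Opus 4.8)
The plan is to unwind the characterization of Lipschitz constants for integer-valued functions stated just above the theorem, and then reduce everything to the triangle inequality for absolute value. Since the codomain is $(\Z, c_1)$, the excerpt already tells us that a function $h:(X,\kappa)\to(\Z,c_1)$ has Lipschitz constant $m$ if and only if $a\adjeq_\kappa b$ implies $|h(a)-h(b)|\le m$. So I would restate both bullet points purely in terms of this absolute-value condition and verify each inequality directly, without ever constructing explicit $c_1$-paths.

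**For the sum**, I would take any $\kappa$-adjacent pair $a\adjeq_\kappa b$ and estimate
\[
|(f+g)(a)-(f+g)(b)| = |(f(a)-f(b)) + (g(a)-g(b))| \le |f(a)-f(b)| + |g(a)-g(b)| \le m + l,
\]
where the first inequality is the triangle inequality in $\Z$ and the second uses the hypotheses on $f$ and $g$. By the characterization, this exactly says $f+g$ has Lipschitz constant $m+l$.

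**For the scalar multiple**, the same pair $a\adjeq_\kappa b$ gives
\[
|(cf)(a)-(cf)(b)| = |c|\,|f(a)-f(b)| \le |c|\,m,
\]
using multiplicativity of absolute value and then the hypothesis on $f$; again the characterization finishes the argument.

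I do not expect any genuine obstacle here: the only thing to be careful about is that the phrase ``has Lipschitz constant $m$'' as literally defined demands the \emph{existence} of a $c_1$-path of length exactly $m$, so I must lean on the stated equivalence (Lipschitz constant $m$ $\iff$ $|f(a)-f(b)|\le m$ for adjacent points) rather than the raw definition. The mild subtlety is that ``Lipschitz constant $m$'' is really an upper bound rather than a sharp value, so the conclusions should be read as ``$f+g$ admits $m+l$ as a Lipschitz constant'' and ``$cf$ admits $|c|m$ as one''; with that reading the proofs are immediate from the two displayed inequalities.
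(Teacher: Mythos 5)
Your proof is correct, and it is exactly the routine argument the paper has in mind: the paper omits the proof entirely (``The proofs are routine''), having stated the absolute-value characterization of Lipschitz constants for $\Z$-valued functions immediately beforehand precisely so that this theorem reduces to the triangle inequality and multiplicativity of $|\cdot|$, as you do. Your closing remark correctly identifies the one point worth being careful about, namely that a $c_1$-path of length exactly $m$ from $u$ to $v$ exists if and only if $|u-v|\le m$ (since consecutive points of a path may be equal), which is why the characterization, and hence your argument, is valid.
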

The proofs are routine. Since continuous functions have Lipschitz constant 1, we have:
\begin{cor}\label{ctsaddition}
If $f,g:X\to \Z$ are continuous, then $f+g$ and $f-g$ each have Lipschitz constant 2. \qed
\end{cor}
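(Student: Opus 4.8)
The plan is to derive this directly from Theorem~\ref{lipaddition}, using the observation recorded just before it: a function into $(\Z,c_1)$ is continuous if and only if it has Lipschitz constant $1$. Since $f$ and $g$ are assumed continuous, I would begin by setting $m = l = 1$ in the notation of the theorem, so that both hypotheses of Theorem~\ref{lipaddition} are met with these specific constants.

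For the sum, I would simply invoke the first bullet of Theorem~\ref{lipaddition}: $f+g$ has Lipschitz constant $m+l = 1+1 = 2$. No further argument is needed.

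For the difference, the natural move is to rewrite $f-g = f + (-1)g$ so as to reduce to the two operations already covered by the theorem. First I would apply the scalar-multiplication bullet with $c=-1$ to conclude that $(-1)g = -g$ has Lipschitz constant $|-1|\cdot 1 = 1$; since $-g$ is again a continuous $\Z$-valued function this is consistent, but the point is that we may feed it into the addition rule. Then applying the addition bullet to $f$ and $-g$ yields Lipschitz constant $1+1 = 2$ for $f + (-g) = f-g$, as claimed.

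Because this is a specialization of an already-established theorem, I do not expect any genuine obstacle; the entire content lies in Theorem~\ref{lipaddition} and the equivalence ``continuous $\iff$ Lipschitz constant $1$.'' The only point deserving a word of care is the handling of subtraction via negative scalar multiplication, which is exactly why the $|c|$ (rather than $c$) appears in the scalar rule and keeps the resulting constant nonnegative.
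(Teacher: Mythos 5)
Your proposal is correct and follows exactly the paper's intended route: the paper derives this corollary from Theorem~\ref{lipaddition} by noting that continuity is equivalent to Lipschitz constant $1$, which is precisely your argument. Your explicit handling of $f-g$ as $f+(-1)g$ via the scalar bullet spells out the one step the paper leaves implicit, but it is the same proof.
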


Lipschitz constants also behave predictably with respect to compositions. Again the proof is routine.
\begin{thm}
If $g:X\to Y$ has Lipschitz constant $l$ and $f:Y \to Z$ has Lipschitz constant $m$, then $f\circ g:X\to Z$ has Lipschitz constant $ml$. \qed
\end{thm}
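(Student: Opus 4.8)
The plan is to unwind both Lipschitz conditions along a single path and then concatenate the resulting segments. Write $\kappa,\lambda,\mu$ for the adjacency relations on $X,Y,Z$, so that $g$ has $(\kappa,\lambda)$-Lipschitz constant $l$ and $f$ has $(\lambda,\mu)$-Lipschitz constant $m$. Fix $a,b\in X$ with $a\adj_\kappa b$; the goal is to exhibit a $\mu$-path of length $ml$ from $f(g(a))$ to $f(g(b))$, which is exactly the assertion that $f\circ g$ has Lipschitz constant $ml$.

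First I would apply the hypothesis on $g$ to obtain a $\lambda$-path $p:[0,l]_\Z \to Y$ with $p(0)=g(a)$ and $p(l)=g(b)$. Since $p$ is by definition $(c_1,\lambda)$-continuous, each consecutive pair satisfies $p(i)\adjeq_\lambda p(i+1)$ for $0\le i < l$. The idea is then to push this path forward through $f$, replacing each single step of $p$ by an $f$-image segment of length $m$.

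Next, for each index $i$ I would produce a $\mu$-path $q_i:[0,m]_\Z \to Z$ from $f(p(i))$ to $f(p(i+1))$. When $p(i)\adj_\lambda p(i+1)$, such a path of length exactly $m$ is precisely what the Lipschitz condition on $f$ supplies. The one case needing a moment's care is $p(i)=p(i+1)$: here the adjacency hypothesis of $f$ does not literally apply, but then $f(p(i))=f(p(i+1))$ and I can simply take $q_i$ to be the constant path at this common value, which is $(c_1,\mu)$-continuous and has length $m$. This degenerate step is the only real subtlety, and it is minor; it is also the same padding-with-constants observation that makes ``path of length $m$'' behave interchangeably with ``path of length at most $m$.''

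Finally I would concatenate $q_0,\dots,q_{l-1}$. Because $q_i(m)=f(p(i+1))=q_{i+1}(0)$, gluing them end-to-end produces a single $(c_1,\mu)$-continuous map on $[0,ml]_\Z$ with endpoints $f(p(0))=f(g(a))$ and $f(p(l))=f(g(b))$, i.e.\ a $\mu$-path of length $ml$. The only portions with any content are the constant-path fix in the degenerate case and the bookkeeping that concatenating $l$ paths of length $m$ yields a path of length exactly $ml$; I expect neither to present a genuine obstacle, which is consistent with the author's remark that the proof is routine.
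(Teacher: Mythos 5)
Your proof is correct, and it is precisely the routine concatenation argument the paper has in mind when it omits the proof with ``Again the proof is routine'': push the length-$l$ path for $g$ through $f$ step by step and glue the resulting length-$m$ paths. Your handling of the degenerate case $p(i)=p(i+1)$ via a constant path is the right (and only) point of care, since the Lipschitz hypothesis on $f$ is stated only for adjacent points.
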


Since a continuous function has Lipschitz constant 1, the above gives:
\begin{cor}\label{lipcomposition}
If $g:X\to Y$ is continuous and $f:Y \to Z$ has Lipschitz constant $m$, then $f\circ g:X\to Z$ has Lipschitz constant $m$. \qed
\end{cor}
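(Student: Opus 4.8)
The plan is to reduce this statement directly to the preceding composition theorem, which I am permitted to assume. The key observation, already noted just before the relevant definitions, is that a digitally continuous function is exactly one with Lipschitz constant $1$: if $a\adj_\kappa b$ then continuity gives $g(a)\adjeq g(b)$, and in either case ($g(a)=g(b)$ or $g(a)\adj g(b)$) the images are joined by a $\lambda$-path of length $1$. Hence the continuous map $g$ has Lipschitz constant $l=1$.

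With that identification in hand, I would simply invoke the composition theorem with $l=1$: the composite $f\circ g$ then has Lipschitz constant $ml = m\cdot 1 = m$, which is precisely the assertion. So the corollary is immediate once continuity is recast as the Lipschitz-constant-$1$ condition.

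There is essentially no obstacle here, since all of the content lives in the composition theorem. For completeness, that theorem's own (routine) argument chains the two path-length bounds: given $a\adj_\kappa b$, continuity of $g$ yields $g(a)\adjeq g(b)$, and the Lipschitz bound on $f$ supplies a $\lambda$-path of length $m$ from $f(g(a))$ to $f(g(b))$. The only mild point worth checking is the degenerate case $g(a)=g(b)$, where $f(g(a))=f(g(b))$ and the natural path has length $0$; this is harmless, because a length-$0$ path can always be padded up to any prescribed length by remaining stationary (the constant steps are allowed since $\adjeq$ includes equality), so the length-$m$ requirement is still met.
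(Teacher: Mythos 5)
Your proof is correct and follows exactly the paper's route: the paper also observes that continuity is the same as having Lipschitz constant $1$ and then specializes the composition theorem to $l=1$, yielding $m\cdot 1=m$. Your added remark about padding length-$0$ paths with stationary steps is a fine justification of the ``continuity implies Lipschitz constant $1$'' identification, which the paper treats as immediate from the definition.
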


$(\kappa,\lambda)$-Lipschitz constants can be equivalently formulated in terms of continuity with respect to a different adjacency. For a digital image $(Y,\lambda)$, let $\lambda^k$ be the adjacency relation defined by: $a$ is $\lambda^k$-adjacent to $b$ if and only if there is a $\lambda$-path of length $k$ from $a$ to $b$. This exponentiation operation has the following simple properties:
\begin{itemize}
\item $\lambda^1 = \lambda$
\item $(\lambda^m)^k = \lambda^{mk}$
\end{itemize}
Most useful for us will be the following relation between powers of $\lambda$ and Lipschitz constants. Again we omit the routine proof.
\begin{thm}\label{powerlip}
Let $(X,\kappa)$ and $(Y,\lambda)$ be digital images, and let $f:X\to Y$ be any function. Then $f$ has $(\kappa,\lambda)$-Lipschitz constant $m$ if and only if $f$ is $(\kappa, \lambda^m)$-continuous. \qed
\end{thm}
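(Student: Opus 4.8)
The plan is to prove the equivalence by directly unwinding the three definitions involved — the $(\kappa,\lambda)$-Lipschitz constant, the exponentiated adjacency $\lambda^m$, and $(\kappa,\lambda^m)$-continuity — and observing that each reduces to the same assertion about the existence of a $\lambda$-path of length $m$ between image points. Both implications will follow from fixing a single $\kappa$-adjacent pair $a \adj_\kappa b$ and comparing what each hypothesis tells us about $f(a)$ and $f(b)$.

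For the forward direction I would assume $f$ has $(\kappa,\lambda)$-Lipschitz constant $m$ and fix $a \adj_\kappa b$. By definition there is a $\lambda$-path of length $m$ from $f(a)$ to $f(b)$. If $f(a) \neq f(b)$, this is exactly the condition for $f(a) \adj_{\lambda^m} f(b)$; if $f(a) = f(b)$, then trivially $f(a) \adjeq_{\lambda^m} f(b)$. In either case $f(a) \adjeq_{\lambda^m} f(b)$, which is precisely $(\kappa,\lambda^m)$-continuity. Conversely, assuming continuity, the same pair yields $f(a) \adjeq_{\lambda^m} f(b)$, so either the images are equal or they are $\lambda^m$-adjacent; in the latter case the definition of $\lambda^m$ supplies the required $\lambda$-path of length $m$, and in the former a length-$m$ path also exists, giving the Lipschitz condition back.

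The one point requiring care — and the only real content beyond bookkeeping — is the case $f(a) = f(b)$. The Lipschitz condition demands a $\lambda$-path of length exactly $m$, yet $\lambda^m$-adjacency is antireflexive and so cannot be witnessed by a point and itself. The resolution is that a $\lambda$-path is $(c_1,\lambda)$-continuous, and continuity permits consecutive values to be equal (via $\adjeq$); hence the constant path $p(i) = f(a)$ on $[0,m]_\Z$ is a legitimate $\lambda$-path of length $m$ from $f(a)$ to itself. This stalling phenomenon is exactly what lets ``length $m$'' behave like ``length at most $m$,'' and handling it consistently in both directions is where I would focus attention; everything else is a routine translation between the equivalent phrasings.
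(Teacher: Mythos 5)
Your proof is correct and complete: the paper omits this proof as ``routine,'' and your direct unwinding of the three definitions is exactly the intended routine argument. Your careful treatment of the $f(a)=f(b)$ case --- noting that the constant path on $[0,m]_\Z$ is a legitimate $\lambda$-path of length $m$ because path-continuity uses $\adjeq$, while $\lambda^m$-adjacency itself is antireflexive --- is the one genuinely delicate point, and you handle it correctly in both directions.
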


\section{A Borsuk-Ulam theorem for $\Z$-valued functions}\label{dim1section}
In dimension 1, the Borsuk-Ulam Theorem says that any continuous map $f:S^1 \to \R$ has a point $x$ with $f(x)=f(-x)$. The classical proof takes the map $h(x) = f(x)-f(-x)$, and applies the intermediate value theorem to show that $h$ must have the value 0 for some $x$. 

Our adaptation of this proof will require a digital analog of the intermediate value theorem which was proved by Rosenfeld in \cite[Theorem 3.1]{rose86}. After completing our work we learned of the material by Boxer in \cite{boxe10}, which gives a special case of our result in this section. Boxer uses essentially the same ideas, but focuses only on continuous functions (rather than functions with Lipschitz constant $m>1$), and focuses only on the case where the domain is a ``digital simple closed curve''. We are grateful to L. Boxer for bringing this material to our attention. 

The difficulty in adapting the classical proof  of the Borsuk-Ulam theorem to digital images is that, even when $f$ is continuous, the function $f(x)-f(-x)$ may not be continuous, and thus Rosenfeld's intermediate value theorem does not apply. By Corollary \ref{ctsaddition}, however, $f(x)-f(-x)$ will have Lipschitz constant 2. Thus we will need to generalize Rosenfeld's theorem for functions with Lipschitz constant greater than 1. The theorem below is a direct generalization of the digital intermediate value theorem: when $f$ is continuous we have $m=1$ and the conclusion reads $f(z)=c$.
\begin{thm}\label{ivt}
Let $X$ be connected and $f:(X,\kappa)\to (\Z,c_1)$ have Lipschitz constant $m>0$, and let $x,y \in X$ and $c\in \Z$ with $f(x)\le c \le f(y)$. Then there is a point $z \in X$ with $|f(z) - c| < m$.
\end{thm}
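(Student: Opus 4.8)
The plan is to mimic the classical intermediate value argument by reducing everything to a single path and then tracking how the function's value can cross the threshold $c$. First I would use the connectedness of $X$ to choose a $\kappa$-path $p:[0,n]_\Z \to X$ with $p(0)=x$ and $p(n)=y$. Composing with $f$ gives $g = f\circ p : [0,n]_\Z \to \Z$. Since $p$ is $(c_1,\kappa)$-continuous (Lipschitz constant $1$) and $f$ has Lipschitz constant $m$, Corollary \ref{lipcomposition} tells us that $g$ has Lipschitz constant $m$. Because $g$ is $\Z$-valued, this means exactly that $|g(i)-g(i+1)| \le m$ for each $i$, so consecutive values of $g$ move by at most $m$. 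Note also that $g(0)=f(x)\le c$ and $g(n)=f(y)\ge c$.

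Next I would locate a crossing index. Let $i$ be the smallest index in $[0,n]_\Z$ with $g(i)\ge c$; this exists since $g(n)\ge c$. If $i=0$, then $c\le g(0)=f(x)\le c$ forces $g(0)=c$, and $z=p(0)$ works with $|f(z)-c|=0<m$. Otherwise $i>0$, and by minimality $g(i-1)<c$, so (working with integers) $g(i-1)\le c-1$. Combining with the Lipschitz bound $g(i)-g(i-1)\le m$ gives $c\le g(i)\le g(i-1)+m\le c-1+m$, hence $0\le g(i)-c\le m-1$. Therefore $z=p(i)$ satisfies $|f(z)-c|=|g(i)-c|\le m-1<m$, as required.

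The heart of the argument, and the only place where the weaker hypothesis ($m>1$ rather than $m=1$) enters, is this final estimate: because $g$ can jump by as much as $m$ in a single step, it may ``leap over'' the value $c$ entirely rather than landing on it, which is precisely why we can only guarantee $|f(z)-c|<m$ instead of equality. I do not anticipate a serious obstacle here; the main point to get right is the bookkeeping at the crossing step, namely that integrality upgrades $g(i-1)<c$ to $g(i-1)\le c-1$ and thereby pins $g(i)$ into the window $[c,\,c+m-1]$. The degenerate case $x=y$ (so $n=0$) is absorbed into the $i=0$ case above.
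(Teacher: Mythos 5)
Your proposal is correct and follows essentially the same route as the paper: choose a path from $x$ to $y$, note that $f\circ p$ has Lipschitz constant $m$ by Corollary \ref{lipcomposition}, locate the first index where the composite value reaches $c$, and bound the overshoot using the Lipschitz estimate at that crossing step. Your version is if anything slightly more careful than the paper's (explicit minimal crossing index, the integrality upgrade to $\le m-1$, and the degenerate cases folded into $i=0$), but the underlying argument is identical.
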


\begin{proof}
If $c=f(x)$ or $c=f(y)$ then the conclusion follows trivially. Thus we will assume that $f(x) < c < f(y)$.

Let $p:[0,t]_\Z \to X$ be a path from $x$ to $y$. Since $f(x)< c< f(y)$ and $f(x)\neq f(y)$, some values of $f\circ p$ are greater than or equal to $c$ and some are less than $c$. Thus is some $s$ such that $f(p(s-1)) < c \le f(p(s))$. Let $z=p(s)$, and we have:
\[ |f(z)-c| = |f(p(s))-c| = f(p(s))-c < f(p(s)) - f(p(s-1)) \le m. \]
By Corollary \ref{lipcomposition}, $f \circ p:[0,t]_\Z \to \Z$ has Lipschitz constant $m$, which gives the last inequality above. We have shown that $|f(z)-c| < m$ as desired.
\end{proof}

Recall that the standard dimension 1 Borsuk-Ulam theorem concerns functions $f:S^1\to \R$. For our digital analogue we will replace $\R$ by $(\Z,c_1)$, and it seems most natural to use a simple cycle of points to replace $S^1$. Let $C_n$ be a digital image given by $C_n = \{c_0,\dots, c_{n-1}\}$ with the adjacency relation $\gamma$ given by $c_i \adj_{\gamma} c_j$ if and only if $|i-j| = 1 \pmod n$. For a point $c_i \in C_n$, define the ``antipodal point'' $-c_i$ by $-c_i = c_{i+k}$, where $k=\lfloor n/2 \rfloor$.

The natural statement of the Borsuk-Ulam theorem in our context would then be the following: If $f:(C_n,\gamma) \to (\Z,c_1)$ is continuous, then there is a point $x\in C_n$ with $f(x)=f(-x)$. This is not true, as the following counterexample demonstrates: (a similar example appears in \cite[Figure 7]{boxe10})
\begin{exl}
Let $f:C_8 \to \Z$ be the map given by: 
\[ (f(c_0), \dots, f(c_7)) = (0,0,1,2,2,2,2,1). \]
Then $f$ is $(\gamma, c_1)$-continuous, and $f(-c_i)$ have the following values:
\[ (f(-c_0), \dots, f(-c_7)) = (2,2,2,1,0,0,1,2), \]
and we can see that $f(c_i) \neq f(-c_i)$ for all $i$.
\end{exl}

The classical Borsuk-Ulam theorem fails to hold for the above example because the function $f(x)-f(-x)$ fails to be continuous, and thus the intermediate value theorem does not hold. We will need to weaken the conclusion in our Borsuk-Ulam theorem in light of the slightly weaker conclusion to our intermediate value theorem from Theorem \ref{ivt}.

It turns out, as we will see below, that the domain space need not be a cycle $C_n$, but needs only be a digital image with some continuous ``antipodality'' function. A function $\tau:X\to X$ is an \emph{involution} when it is continuous and $\tau(\tau(x)) = x$ for all $x\in X$. If $\tau$ has no fixed points, it is a \emph{free involution}.
We say that a digital image $(X,\kappa)$ is \emph{$\tau$-involutive} if it admits a $\kappa$-continuous free involution $\tau:X\to X$. By analogy with $S^n$, we will write $\tau(x) = -x$ when no confusion will arise, and simply call the image \emph{involutive}. 

We are now ready to prove our digital Borsuk-Ulam for $\Z$-valued functions. We follow exactly the classical proof. (See \cite[Theorem 5.1]{boxe10} for a slightly weaker form of this result.)
\begin{thm}\label{BUdim1}
Let $(X,\kappa)$ be a connected involutive digital image, and let $f:(X,\kappa) \to (\Z,c_1)$ have Lipschitz constant $m > 0$. Then there is a point $x\in X$ such that $|f(x) - f(-x)| < 2m$. 
\end{thm}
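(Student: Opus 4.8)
The plan is to imitate the classical dimension-1 argument essentially verbatim, substituting the generalized intermediate value theorem (Theorem \ref{ivt}) for its continuous counterpart. First I would introduce the ``antipodal difference'' function $h:X\to\Z$ given by $h(x) = f(x) - f(-x)$. The formal property that drives the whole proof is that $h$ is odd with respect to the involution: since $\tau(x)=-x$ satisfies $\tau(\tau(x))=x$, we have $h(-x) = f(-x) - f(-(-x)) = f(-x) - f(x) = -h(x)$.

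Next I would pin down a Lipschitz constant for $h$. The antipodal map $\tau$ is by hypothesis a $\kappa$-continuous free involution, hence has Lipschitz constant $1$, so the composite $x\mapsto f(-x) = (f\circ\tau)(x)$ has Lipschitz constant $m$ by Corollary \ref{lipcomposition}. Writing $h = f + (-1)(f\circ\tau)$ and applying Theorem \ref{lipaddition}, I conclude that $h$ has Lipschitz constant $m + m = 2m$. I would then locate a sign change of $h$: pick any $x_0\in X$; if $h(x_0)=0$ the conclusion already holds with $x=x_0$, so assume $h(x_0)\neq 0$, and replacing $x_0$ by $-x_0$ if necessary (which flips the sign of $h$ by oddness) we may take $h(x_0)>0$, whence $h(-x_0) = -h(x_0) < 0$, so that $h(-x_0) \le 0 \le h(x_0)$.

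Finally I would apply Theorem \ref{ivt} to the function $h$, which has Lipschitz constant $2m>0$ on the connected image $X$, using the two points $-x_0$ and $x_0$ and the target value $c=0$. This produces a point $z\in X$ with $|h(z) - 0| < 2m$, that is $|f(z) - f(-z)| < 2m$, which is exactly the claimed bound. The one delicate point is that $h$ is in general \emph{not} continuous --- even when $f$ is continuous its Lipschitz constant is $2$ rather than $1$ --- so Rosenfeld's original intermediate value theorem cannot be invoked directly. That is precisely the obstacle Theorem \ref{ivt} was designed to remove, so once the Lipschitz bookkeeping above is in place the argument proceeds mechanically, and I expect no further difficulty.
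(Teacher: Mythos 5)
Your proposal is correct and follows essentially the same route as the paper's proof: define the odd antipodal difference $h(x)=f(x)-f(-x)$, establish its Lipschitz constant $2m$ via Theorem \ref{lipaddition}, locate a sign change using $h(-x)=-h(x)$, and invoke Theorem \ref{ivt} with $c=0$. The only differences are cosmetic --- the paper phrases the argument as a contradiction while you argue directly, and you justify the Lipschitz constant of $f\circ\tau$ by explicitly citing Corollary \ref{lipcomposition} where the paper calls it ``easy to see.''
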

\begin{proof}
Since $f(x)$ has Lipschitz constant $m$, it is easy to see that the function $f(-x)$ also has Lipschitz constant $m$. Let $h:X \to \Z$ be defined by $h(x) = f(x) - f(-x)$. By Lemma \ref{lipaddition}, $h$ has Lipschitz constant $2m$. To obtain a contradiction, assume that $|f(x) - f(-x)| \ge 2m$ for all $x$, that is, $|h(x)|\ge 2m$ for all $x$. 

Since the function $x \mapsto -x$ is an involution, we have $h(-x)=-h(x)$ for all $x$. Since $h$ never takes the value 0, this means that $h(x)$ is positive for some $x$ and negative for others. Thus there is some point $a\in X$ such that $h(a) > 0$ and thus $h(-a) = -h(a)< 0$. Thus we have $h(-a) < 0 < h(a)$, and by Theorem \ref{ivt} there is some $z\in X$ with $|h(z)|<2m$. This contradicts our assumption that $|h(x)|\ge 2m$ for all $x$.
\end{proof}

Equivalently we can state the theorem in terms of powers of the $c_1$ adjacency on $\Z$. By Theorem \ref{powerlip} we have:
\begin{cor}\label{BUdim1cor}
Let $(X,\kappa)$ be a connected involutive digital image, and let $f:X \to \Z$ be $(\kappa, c_1^m)$-continuous for $m\ge 0$. Then there is a point $x\in X$ such that $f(x) \adjeq_{c_1^{2m-1}} f(-x)$. 
\end{cor}

In the case where $f$ is continuous we can replace $m$ above by 1, and we obtain:
\begin{cor}
Let $(X,\kappa)$ be a connected involutive digital image, and let $f:(X,\kappa) \to (\Z,c_1)$ be continuous. Then there is a point $x\in X$ with $f(x)\adjeq_{c_1} f(-x)$. 
\end{cor}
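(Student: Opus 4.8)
The plan is to read this off directly from the machinery already assembled, since a continuous function is exactly a function with Lipschitz constant $1$ (a fact recorded immediately after the definition of the Lipschitz constant). So I would take $m=1$ throughout and simply track how the conclusions of Theorem \ref{BUdim1} and Corollary \ref{BUdim1cor} specialize.

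First, via Theorem \ref{BUdim1}: because $f$ is continuous it has Lipschitz constant $m=1$, so the theorem produces a point $x\in X$ with $|f(x)-f(-x)| < 2m = 2$. The key step --- really the only step --- is to observe that $f(x)$ and $f(-x)$ are integers, so the strict inequality $|f(x)-f(-x)| < 2$ forces $|f(x)-f(-x)| \le 1$. By the characterization of $c_1$-adjacency on $\Z$ (the values differ by exactly $1$) together with the convention that $\adjeq$ permits equality, this is precisely the statement $f(x) \adjeq_{c_1} f(-x)$.

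Alternatively, and perhaps more cleanly, I would invoke Corollary \ref{BUdim1cor}: a continuous $f$ is $(\kappa, c_1) = (\kappa, c_1^1)$-continuous, which is exactly the hypothesis of that corollary with $m=1$. Substituting $m=1$ into the exponent of its conclusion gives $f(x) \adjeq_{c_1^{2m-1}} f(-x) = f(x) \adjeq_{c_1^{1}} f(-x)$, and since $c_1^1 = c_1$ by the first exponentiation identity, the desired conclusion is immediate.

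There is no real obstacle here: the content of the corollary is entirely carried by Theorem \ref{BUdim1}, and this is just the instance $m=1$ in which the weakened Borsuk-Ulam conclusion $|f(x)-f(-x)|<2m$ collapses back to the classical-looking statement that opposite points take adjacent (or equal) values. The one point worth making explicit is the integer rounding $|f(x)-f(-x)|<2 \Rightarrow |f(x)-f(-x)|\le 1$, which is exactly why the continuous case recovers the sharp Borsuk-Ulam conclusion while larger Lipschitz constants cannot.
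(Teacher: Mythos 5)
Your proposal is correct and matches the paper's own derivation: the paper obtains this corollary precisely by setting $m=1$ in Corollary \ref{BUdim1cor} (equivalently, in Theorem \ref{BUdim1}, noting that a continuous function is one with Lipschitz constant $1$ and that $|f(x)-f(-x)|<2$ between integers means $f(x)\adjeq_{c_1} f(-x)$). Both of your routes are sound and amount to the same specialization the paper performs.
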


We conclude the section with one more restatement of Theorem \ref{BUdim1}. Since we make no assumptions on $X$ as embedded in $\Z^n$ with any particular adjacency, we can state the result in the context of pure graph theory. In this context, the digital image $(X,\kappa)$ is viewed as a simple graph (undirected, no looped or multiple edges) with vertex set $X$ and edges corresponding to $\kappa$-adjacencies. The existence of a continuous free involution, in graph theoretic terms, is equivalent to $X$ having an automorphism which is a free involution.

\begin{cor}
Let $X$ be a simple graph with an automorphism $\tau$ which is a free involution, and let $f:X\to \Z$ be any function on the vertex set, and assume that $|f(a)-f(b)| \le m$ whenever $x$ and $y$ are connected by an edge. Then there is some point $x\in X$ with $|f(x)-f(\tau(x))| < 2m$. 
\end{cor}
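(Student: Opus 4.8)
The plan is to recognize this corollary as nothing more than Theorem \ref{BUdim1} re-expressed in the vocabulary of abstract graph theory, so the entire argument consists of matching up the three hypotheses and then invoking that theorem. First I would set up the dictionary between the two formalisms: a simple graph $X$ is exactly a digital image $(X,\kappa)$ in which $\kappa$ is the adjacency relation recording the edges, since $\kappa$ being symmetric and antireflexive is precisely the statement that the edge relation is undirected and loop-free.

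Next I would translate each hypothesis in turn. The key identifications are: (i) a graph automorphism $\tau$ is the same thing as a $\kappa$-continuous bijection whose inverse is also $\kappa$-continuous, so a free-involution automorphism is precisely a $\kappa$-continuous free involution, i.e.\ it witnesses that $(X,\kappa)$ is involutive in the sense required by Theorem \ref{BUdim1}; and (ii) the edge condition ``$|f(a)-f(b)|\le m$ whenever $a$ and $b$ are joined by an edge'' is, by the elementary characterization recorded just before Theorem \ref{lipaddition}, exactly the statement that $f:(X,\kappa)\to(\Z,c_1)$ has Lipschitz constant $m$ (the case $a=b$ being automatic). With these translations in hand, Theorem \ref{BUdim1} delivers a point $x$ with $|f(x)-f(\tau(x))|<2m$, which is the desired conclusion verbatim.

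The one point that requires genuine care --- and which I expect to be the only real obstacle --- is the connectedness assumption that Theorem \ref{BUdim1} carries but the graph-theoretic statement appears to omit. So I would either state the corollary for connected $X$, matching its parent theorem, or else argue the reduction explicitly: since $\tau$ is an automorphism it permutes the connected components of $X$, and because $\tau^2=\id$ this induced permutation is an involution on the set of components. If some component $C$ is fixed by $\tau$, then $\tau|_C$ is a free involution of the connected graph $C$, and Theorem \ref{BUdim1} applied to $f|_C$ already produces the required point. I should flag, however, that when $\tau$ instead swaps two distinct components there are no edges constraining $f$ across them, and the conclusion can genuinely fail (take two isolated vertices exchanged by $\tau$ with $f$-values far apart); hence connectedness of $X$, equivalently the existence of a $\tau$-invariant component, is not merely a convenience but is necessary for the statement to hold. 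Finally I would note that $m>0$ is needed for the strict inequality to be satisfiable at all, consistent with the hypothesis $m>0$ in Theorem \ref{BUdim1}.
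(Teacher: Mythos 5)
Your proposal is correct and is essentially the paper's own argument: the paper offers no separate proof of this corollary, treating it exactly as you do --- as a dictionary translation (graph $\leftrightarrow$ digital image, free-involution automorphism $\leftrightarrow$ continuous free involution, edge condition $\leftrightarrow$ Lipschitz constant $m$) followed by an invocation of Theorem \ref{BUdim1}. Your additional observation is a genuine catch rather than a flaw in your argument: the corollary as printed omits both the connectedness of $X$ and the condition $m>0$ that Theorem \ref{BUdim1} requires, and your two-isolated-vertices example shows the connectedness hypothesis (or at least a $\tau$-invariant component) cannot be dropped, so the paper's statement should be read with those hypotheses understood.
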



\section{A specific example using a grayscale digital image}\label{exlsection}
A typical bitmapped digital image on a computer (for example in jpg or png format) is stored as a rectangular array of color values. For simplicity we will consider grayscale images, which can be stored as a two-dimensional array of integers, where the integer represents the brightness of each pixel. Consider Figure \ref{haystackfig}, which is a low-resolution grayscale image of Monet's \emph{Grainstack (Sunset)}. At this resolution and color-depth, this image is represented as a $150\times 118$ array of integers in the range $[0,255]_\Z$, where 0 indicates a pure black pixel, and 255 indicates pure white. We adopt the standard computer-graphics convention that pixel $(0,0)$ is in the top-left corner of the displayed image.

In our abstract model, the Grainstack image is the 2-box $B = [0,149]_\Z\times [0,117]_\Z$, equipped with a ``brightness'' function $f:B \to [0,255]_\Z$. Let $X$ be the ``boundary'' of $B$, the perimeter cycle of 536 points. The natural free involution on $X$ is the function $\tau(a,b) = (149 - a, 117 - b)$ which exchanges ``opposite'' points. 

For an image of this size, it is easy to check continuity of $f$ by comparing the brightness of adjacent pixels. The function $f$ will be $(c_k,c_1)$-continuous if all $c_k$-adjacent pixels have brightnesses differing by at most 1. 

Few typical grayscale images will be $(c_k,c_1)$ continuous, since this precludes any sharply-defined features in the image. The $(c_k,c_1)$-Lipschitz constant (typically not equal to 1) is a measure of how sharply the brightness can change between any adjacent pixels. This can easily be computed in our example image.

In our specific example, the restriction of $f$ to $X$ has $(c_2, c_1)$-Lipschitz constant 23. The greatest change in brightness between adjacent pixels on the boundary occurs on the right edge of the image where the haystack meets the sky: between pixels (149,29) and (149,30) the brightness changes by 23.

Theorem \ref{BUdim1} applies, and states that there is a point $x\in X$ with $|f(x) - f(-x)| < 2\cdot 23 = 46$. That is, there is some pair of opposite points whose brightness differ by less than 46. This is also easy to verify for an image of this size, and in fact the antipodal points with the closest brightness values are $(0,87)$ and $(149,30)$, which have brightness differing by 13. These are marked on Figure \ref{haystackmarked}.

\begin{figure}
\[ \includegraphics[scale=.5]{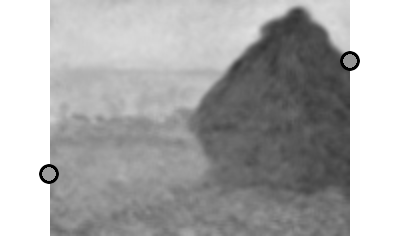} \]
\caption{The image from Figure \ref{haystackfig} with antipodal pixels of closest brightness indicated\label{haystackmarked}}
\end{figure}

Informally speaking, in the case of typical grayscale images, our result states that the the largest possible difference in brightness between antipodal boundary points is two times the largest change in brightness between adjacent boundary points.

\section{Higher dimensions}\label{higherdimsection}
A higher dimensional Borsuk-Ulam theorem would consider maps from certain involutive digital images into $\Z^n$ with $n>1$. Two factors will complicate matters in this more general setting. For $n>1$, we have several competing options for the adjacency to use on the codomain space $\Z^n$. It is also not obvious what the assumptions on the digital image $X$ should be, in order that it resemble the sphere $S^n$. 

Several authors have considered ``sphere-like'' digital images. We adapt a formulation from \cite{boxe06}, where Boxer defines an $n$-sphere as $S_n \subset \Z^{n+1}$ given by $S_n = [-1,1]^{n+1}_\Z - \{\mathbf 0\}$, where $\mathbf 0$ is the origin in $\Z^{n+1}$. We will consider slightly more general digital images: our results hold for any image which is the boundary of an $n$-box:
\begin{definition}
A set $B \subset \Z^n$ is called a \emph{[digital] $n$-box} if it has the form:
\[ B = [a_1,b_1]_\Z \times \dots \times [a_n,b_n]_\Z, \] 
where $a_i,b_i \in \Z$ with $a_i>b_i$ for each $i$. If $b_i = a_i+1$ for each $i$, we say $B$ is a \emph{unit} $n$-box.

For an $n$-box $B$, let $\pd B$ denote the \emph{boundary} of $B$, defined by:
\[ \pd B = \bigcup_{i=1}^n (\{ x \in B : x_i = a_i\} \cup \{x \in B : x_i = b_i\}). \]
The $n$-box $B$ carries a canonical involution $\tau:B \to B$ given by:
\[ \tau(x_1,\dots x_n) = ( a_1 + b_1 - x_1, \dots, a_n + b_n - x_n )\]
This is an ``antipodal'' reflection of $B$, which restricts to a free involution on $\pd B$. As usual, we will denote $\tau(x) = -x$ (note that this will not equal the usual additive inverse of $x\in \Z^n$).
\end{definition}

Boxer's $S_n$ is the boundary of a $(n+1)$-box $S_n = \pd([-1,1]_Z^{n+1})$. 

Our expected formulation of the Borsuk-Ulam theorem, then, is the following: If $B$ is an $n$-box and $f:\pd B \to \Z^{n-1}$ is continuous, then there is some point $x$ where $f(x) \adjeq f(-x)$.

Our main approach will take a digitally continuous function $f:\pd B \to \Z^n$ and find the desired point $x$ by replacing $f$ by a continuous map on $S^{n-1} \to \R^n$ and applying the classical Borsuk-Ulam theorem. 

For any subset $S\subset \Z^n$, let $|S|\subset \R^n$ be its convex hull. In particular if $B\subset \Z^n$ is an $n$-box, we have 
\[ |B| = [a_1,b_1] \times \dots \times [a_n,b_n], \]
and $|B|$ is homeomorphic to the closed $n$-ball in $\R^n$. 

Let $\pd |B|$ be the topological boundary of $|B|$, or equivalently,
\[ \pd |B| = \bigcup_{i=1}^n (\{ x \in |B| : x_i = a_i\} \cup \{x \in |B| : x_i = b_i\}). \]

Let $\mathcal T$ be any triangulation of $\R^n$ whose vertex set is $\Z^n$. Any function $f:\Z^n \to \Z^m$ induces a piecewise linear real-valued map $F:\R^n \to \R^m$ by extending $f$ linearly along the simplices of $\mathcal T$. This $F$ we call the \emph{linearization of $f$ on $\mathcal T$}. Such linearizations respect simplices in the following sense:

\begin{lem}\label{linearizationlem}
Let $F:\R^n\to \R^m$ be the linearization of $f:\Z^n \to \Z^m$ on some triangulation $\mathcal T$. Let $\sigma$ be the vertex set of a simplex of $\mathcal T$. Then $F(|\sigma|) = |f(\sigma)|$. 
\end{lem}
\begin{proof}
Let $\sigma = \{x_0, \dots, x_k\}$. The set $F(|\sigma|)$ is $F$ applied to the convex hull of $x_0, \dots, x_k$, and since $F$ is linear on the interior of $|\sigma|$, this is the convex hull of $F(x_0),\dots, F(x_k)$. Since $F$ and $f$ agree on the integer lattice and $x_i\in \Z^n$ for each $i$, we have that $F(|\sigma|)$ is the convex hull of $f(x_0), \dots, f(x_k)$, which equals $|f(\sigma)|$. 
\end{proof}

In a digital image $(X,\kappa)$, a simplex (or $\kappa$-simplex) (see \cite{ako08}) is a set of points of $X$ which are mutually $\kappa$-adjacent. 

Each of our higher-dimensional Borsuk-Ulam theorems rely on the following lemma about digital simplices.
\begin{lem}\label{simplexlem}
Let $B\subset \Z^{n}$ be an $n$-box, and let $f:\pd B \to \Z^{n-1}$ be any function. Then there is a $c_n$-simplex $\sigma \subset \pd B$  such that $|f(\sigma)| \cap |f(-\sigma)| \neq \emptyset$. 
\end{lem}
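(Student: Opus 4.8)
The plan is to reduce to the classical Borsuk--Ulam theorem by linearizing $f$ over a carefully chosen triangulation, in such a way that the classical ``coincidence point'' lands inside a single $c_n$-simplex. First I would fix $\mathcal T$ to be the Freudenthal--Kuhn triangulation of $\R^n$ with vertex set $\Z^n$, in which each unit cube $\prod_{i}[k_i,k_i+1]$ is subdivided into $n!$ simplices indexed by permutations. The decisive feature of this triangulation is that every simplex is contained in a single unit cube, so the vertices of any simplex $\sigma$ differ by at most $1$ in each coordinate; hence $\sigma$ is automatically a $c_n$-simplex. Moreover, since no simplex crosses an integer hyperplane $x_i = \text{const}$ in its interior, each facet of $|B|$ is triangulated as a subcomplex, so $\pd|B|$ is a subcomplex of $\mathcal T$. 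Let $F:\R^n \to \R^{n-1}$ be the linearization of $f$ on $\mathcal T$.

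Next I would extend the involution $\tau$ to the affine reflection $T(x) = (a_1+b_1,\dots,a_n+b_n) - x$ of $\R^n$ through the center of $|B|$. Because $T$ is the composition of the central inversion $x\mapsto -x$ with a translation by the integer vector $(a_1+b_1,\dots,a_n+b_n)$, and the Freudenthal--Kuhn triangulation is invariant under both (this is checked on the level of a single simplex, where the image is again a Freudenthal--Kuhn simplex with the reversed permutation), the triangulation $\mathcal T$ is $T$-invariant. In particular $T$ carries each simplex $\sigma$ of $\pd|B|$ to the simplex $-\sigma = \tau(\sigma)$. Radial projection from the center gives a homeomorphism $\phi: S^{n-1} \to \pd|B|$ that is equivariant, i.e.\ $\phi(-s) = T(\phi(s))$, where $-s$ denotes the usual antipode on $S^{n-1}$.

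Now I would apply the classical Borsuk--Ulam theorem to the continuous map $F\circ\phi : S^{n-1}\to \R^{n-1}$, obtaining $s\in S^{n-1}$ with $(F\circ\phi)(s) = (F\circ\phi)(-s)$. Writing $p = \phi(s)\in\pd|B|$ and using equivariance, this reads $F(p) = F(T(p))$. Let $\sigma\subset\pd B$ be the vertex set of a simplex of $\mathcal T$ containing $p$; as noted, $\sigma$ is a $c_n$-simplex. Then $p\in|\sigma|$ and $T(p)\in T(|\sigma|) = |{-\sigma}|$, so Lemma~\ref{linearizationlem} applied to $\sigma$ and to $-\sigma$ gives $F(p)\in|f(\sigma)|$ and $F(T(p))\in|f(-\sigma)|$. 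Since $F(p) = F(T(p))$, this common value lies in $|f(\sigma)|\cap|f(-\sigma)|$, which is therefore nonempty, and $\sigma$ is the required simplex.

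The routine points are the verification that radial projection is equivariant and that Freudenthal--Kuhn simplices lie in unit cubes. The main obstacle --- and the real reason this particular triangulation is forced on us --- is ensuring \emph{simultaneously} that (a) the triangulation is fine enough that each of its simplices is a $c_n$-simplex, and (b) it is symmetric under $T$, so that $-\sigma$ is again a genuine simplex of $\mathcal T$ and Lemma~\ref{linearizationlem} may legitimately be applied to it. A generic triangulation with vertex set $\Z^n$ would satisfy neither requirement; the Freudenthal--Kuhn construction is precisely what reconciles the discrete adjacency $c_n$ with the hypotheses of the classical theorem.
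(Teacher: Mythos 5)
Your proof is correct and takes essentially the same route as the paper's: linearize $f$ over an antipodally symmetric triangulation whose simplices are $c_n$-simplices, apply the classical Borsuk--Ulam theorem to the linearization, and conclude via Lemma~\ref{linearizationlem}. The only real difference is that the paper simply posits the existence of such a symmetric triangulation with $1$-skeleton inside $c_n$, whereas you construct one explicitly (Freudenthal--Kuhn) and verify its invariance under the reflection $T$ --- a detail the paper leaves implicit.
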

\begin{proof}
Let $\mathcal T$ be any antipodally symmetric triangulation of $\pd |B|$ whose 1-skeleton is a subset of the edges given by $c_n$. (``Antipodally symmetric'' means that if $\sigma$ is a simplex of $\mathcal T$ then $-\sigma$ is a simplex of $\mathcal T$, where the negation denotes the antipodality involution of $\pd B$.)

Let $F:\pd |B| \to \R^{n-1}$ be the linear extension onto $\mathcal T$ of $f:\pd B \to \Z^{n-1}$. Since $\pd |B|$ is homeomorphic to $S^n$ the classical Borsuk-Ulam theorem gives a point $z\in \pd |B|$ with $F(z)=F(-z)$. 

Let $\sigma \in \mathcal T$ be (the vertex set of) the simplex such that $z\in |\sigma|$. Then since $F(z)=F(-z)$ we must have $F(|\sigma|) \cap F(|-\sigma|) \neq \emptyset$. By Lemma \ref{linearizationlem} this means $|f(\sigma)| \cap |f(-\sigma)| \neq \emptyset$ as desired.
\end{proof}

Our first higher-dimensional Borsuk-Ulam theorem applies to the case where $c_1$-adjacency is used in the codomain of $f$.

\begin{thm}\label{BUcnc1}
Let $B\subset \Z^{n}$ be an $n$-box, and let $f:\pd B \to \Z^{n-1}$ be $(c_n,c_1)$-continuous. Then there is some point $x\in \pd B$ such that $f(x) \adjeq_{c_1} f(-x)$. 
\end{thm}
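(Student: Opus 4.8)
The plan is to feed the simplex produced by Lemma \ref{simplexlem} into an analysis of what a $(c_n,c_1)$-continuous map does to a simplex, and then to locate the desired point among the \emph{vertices} of that simplex rather than in the interior. Concretely, Lemma \ref{simplexlem} hands us a $c_n$-simplex $\sigma \subset \pd B$ with $|f(\sigma)| \cap |f(-\sigma)| \neq \emptyset$, and I would extract the point $x$ from $\sigma$ itself.

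First I would record the effect of continuity on $\sigma$. Since $\sigma$ is a set of mutually $c_n$-adjacent points and $f$ is $(c_n,c_1)$-continuous, any two points of $f(\sigma)$ are $\adjeq_{c_1}$; that is, $f(\sigma)$ is a $c_1$-clique in $\Z^{n-1}$. A short argument shows that a $c_1$-clique contains at most two distinct points, and that when it has two they differ by $1$ in exactly one coordinate. Hence $|f(\sigma)|$ is either a single lattice point or a unit segment parallel to a coordinate axis with lattice endpoints. Because the antipodal reflection $\tau$ is a $c_n$-automorphism of $\pd B$, the set $-\sigma$ is again a $c_n$-simplex, so the same description applies to $|f(-\sigma)|$.

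The key step is a small geometric lemma: if two sets of this restricted form (each a point or an axis-parallel unit segment with lattice endpoints) have intersecting convex hulls, then they must already share a vertex, i.e.\ $f(\sigma) \cap f(-\sigma) \neq \emptyset$. I would prove this by cases on the two shapes. The only delicate case is two unit segments lying in \emph{different} axis directions: there one checks that the unique candidate intersection point has integer coordinates in both varying directions, and matching integer coordinates elsewhere, so it is a lattice point lying on each segment and therefore an endpoint of each. The remaining cases (point/point, point/segment, and two collinear segments) are immediate once one observes that the only lattice points of a unit segment are its two endpoints. This case analysis, and in particular noticing that the crossing of two perpendicular unit segments is forced to be a lattice point, is the part I expect to require the most care.

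Finally, let $w_0 \in f(\sigma)\cap f(-\sigma)$ and choose $x \in \sigma$ with $f(x) = w_0$, which exists since $w_0 \in f(\sigma)$. Then $-x \in -\sigma$, so $f(-x) \in f(-\sigma)$. As $f(-\sigma)$ is a $c_1$-clique containing $w_0$, the value $f(-x)$ is either equal to $w_0$ or $c_1$-adjacent to it; in either case $f(x) = w_0 \adjeq_{c_1} f(-x)$, which is exactly the desired conclusion.
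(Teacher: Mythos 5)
Your proposal is correct and follows essentially the same route as the paper: invoke Lemma \ref{simplexlem}, observe that $f(\sigma)$ and $f(-\sigma)$ are $c_1$-simplices (a point or an axis-parallel unit segment), conclude that intersecting convex hulls force a shared vertex $y$, and then pick $x\in\sigma$ with $f(x)=y$ so that $f(x)\adjeq_{c_1} f(-x)$. The only difference is that you spell out the case analysis behind the shared-vertex claim, which the paper simply asserts.
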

\begin{proof}
This proof is made possible by the rigid structure of simplices in $c_1$. Any $c_1$-simplex consists simply of a pair of points, and its convex hull is a line segment. Two $c_1$-simplexes whose convex hulls intersect must share a vertex.

By Lemma \ref{simplexlem} there is a $c_n$-simplex $\sigma$ with $|f(\sigma)| \cap |f(-\sigma)| \neq \emptyset$, and since both $f(\sigma)$ and $f(-\sigma)$ are $c_1$-simplices with intersecting convex hulls, $f(\sigma) \cap f(-\sigma)$ contains a vertex $y \in \Z^{n-1}$. 

Let $x\in \sigma$ be some point with $f(x) = y$, and then since $y\in f(-\sigma)$ and $f(-x) \in f(-\sigma)$ and $f(-\sigma)$ is a $c_1$-simplex we have $f(x) \adjeq_{c_1} f(-x)$ as desired.
\end{proof}

The above proof makes use of very specific properties of $c_1$ adjacency, and seems difficult to generalize to other adjacencies in a straightforward way. In the case $n=2$ the above corresponds to the $m=1$ case of Corollary \ref{BUdim1cor}. As we saw in Section \ref{exlsection}, typical digital images will not be $c_1$ continuous so it is important to obtain a result for $c_1^m$ adjacency in place of $c_1$. In fact we can generalize to higher values of $m$. We require a special construction for $c_1^m$-simplices in $\Z^n$.

Let $\sigma$ be a $c_1^m$-simplex in $\Z^n$, and let $p_\sigma \in \Z^n$ be the point whose coordinate in each position $i$ is the minimum of the $i$-th coordinates of the points of $\sigma$. Then let $T(\sigma)$ be the set:
\[ T(\sigma) = \{ p_\sigma + x \in \Z^n : \text{ all coordinates of $x$ are nonnegative and }s(x) \le m\}, \]
where $s(x) \in \Z$ is the sum of the coordinates of $x$. 
The points of $T(p)$ form a $c_1^m$-simplex. We have $\sigma \subset T(\sigma)$, and so $|\sigma| \subset |T(\sigma)|$. Also, given any two $c_1^m$-simplices $\sigma$ and $\rho$, if $|\sigma| \cap |\rho| \neq \emptyset$ then $|T(\sigma)|\cap |T(\rho)|$ is nonempty and contains at least one point of $\Z^n$.

We use these sets $T(\sigma)$ to obtain a Borsuk-Ulam theorem when $c_1^m$-adjacency is used in the codomain.
\begin{thm}\label{BUcnc1m}
Let $B\subset \Z^{n}$ be an $n$-box, and let $f:\pd B \to \Z^{n-1}$ be $(c_n,c_1^m)$-continuous. Then there is some point $x\in \pd B$ such that $f(x) \adjeq_{c_1^{2m}} f(-x)$. 
\end{thm}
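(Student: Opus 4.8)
The plan is to run the proof of Theorem~\ref{BUcnc1} again, but with the rigid two-point structure of $c_1$-simplices replaced by the fattening operation $T$ constructed just above. First I would apply Lemma~\ref{simplexlem} to produce a $c_n$-simplex $\sigma \subset \pd B$ with $|f(\sigma)| \cap |f(-\sigma)| \neq \emptyset$. Since the antipodal involution is a $c_n$-automorphism of $\pd B$, the set $-\sigma$ is also a $c_n$-simplex, and as $f$ is $(c_n,c_1^m)$-continuous, both $f(\sigma)$ and $f(-\sigma)$ are $c_1^m$-simplices in $\Z^{n-1}$. Throughout I will use that two lattice points are $c_1^m$-adjacent-or-equal exactly when their $\ell^1$-distance is at most $m$, so that the desired conclusion $f(x)\adjeq_{c_1^{2m}} f(-x)$ is just the inequality $\|f(x)-f(-x)\|_1 \le 2m$.

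Next I would promote the real coincidence point of Lemma~\ref{simplexlem} to an integer one using $T$. Applying the stated intersection property of $T$ to the $c_1^m$-simplices $f(\sigma)$ and $f(-\sigma)$ yields a lattice point $y \in |T(f(\sigma))| \cap |T(f(-\sigma))|$; since for any $c_1^m$-simplex $\rho$ a lattice point of $|T(\rho)|$ lies in $T(\rho)$, we may take $y \in T(f(\sigma)) \cap T(f(-\sigma))$. Now fix any $x\in\sigma$. Then $f(x) \in f(\sigma) \subset T(f(\sigma))$ and $y \in T(f(\sigma))$ both lie in the $c_1^m$-simplex $T(f(\sigma))$, so $\|f(x)-y\|_1 \le m$; the symmetric argument on $-\sigma$ gives $\|f(-x)-y\|_1 \le m$. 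The triangle inequality then delivers $\|f(x)-f(-x)\|_1 \le 2m$, which is exactly $f(x)\adjeq_{c_1^{2m}} f(-x)$.

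The crux of the argument is this second step: Lemma~\ref{simplexlem} only furnishes a \emph{real} point of $|f(\sigma)| \cap |f(-\sigma)|$, whereas the conclusion is a lattice statement, so the work is in manufacturing a common lattice point without letting distances grow past $m$. This is exactly what the three properties of $T$ accomplish ($\sigma \subset T(\sigma)$, that $T(\sigma)$ is a $c_1^m$-simplex, and that intersecting hulls force a shared lattice point), and checking them is where the genuine combinatorics lives. I would remark that the estimate can also be read off directly from the real intersection point $z$: the vertices of a $c_1^m$-simplex are pairwise within $\ell^1$-distance $m$, so the convex hull $|f(\sigma)|$ has $\ell^1$-diameter $m$, whence $\|f(x)-z\|_1 \le m$ and $\|f(-x)-z\|_1 \le m$ for every $x\in\sigma$, again yielding the bound $2m$.
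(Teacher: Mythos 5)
Your proposal is correct and follows essentially the same route as the paper's proof: Lemma \ref{simplexlem} produces the simplex $\sigma$, the $T$-construction converts the real intersection point into a common lattice point $y$ (your observation that lattice points of $|T(\rho)|$ lie in $T(\rho)$ makes explicit a step the paper leaves implicit), and the two relations $f(x)\adjeq_{c_1^m} y$ and $y \adjeq_{c_1^m} f(-x)$ are combined exactly as in the paper, your $\ell^1$ triangle inequality being the metric restatement of the paper's path concatenation. Your closing remark is a genuine small improvement: since the $\ell^1$-diameter of a convex hull is attained at vertices, the bound $2m$ follows directly from the real coincidence point $z$ without invoking $T$ at all, which streamlines the argument.
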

\begin{proof}
Again we begin with Lemma \ref{simplexlem} to obtain a $c_n$-simplex $\sigma$ with $|f(\sigma)| \cap |f(-\sigma)| \neq \emptyset$. Since $f(\sigma)$ and $f(-\sigma)$ are $c_1^m$-simplices, the set $|T(f(\sigma))| \cap |T(f(-\sigma))|$ is nonempty and contains some integer point $y\in \Z^n$. 

Let $x \in \sigma$. Then $f(x) \in f(\sigma)$, and so $f(x) \adjeq_{c_1^m} y$ since $y$ and $f(x)$ are both in $T(f(\sigma))$ which is a $c_1^m$-simplex. Similarly we have $f(-x) \in f(-\sigma)$ and so $f(-x) \adjeq_{c_1^m} y$. Concatenating the $c_1$-paths from $f(x)$ to $y$ and from $y$ to $f(-x)$ gives $f(x) \adjeq_{c_1^{2m}} f(-x)$ as desired.
\end{proof}

Next we prove a similar result to Theorem \ref{BUcnc1} using the adjacency $c_{n-1}$ in place of $c_1$ in the codomain space. We will require the following lemma.
\begin{lem}\label{cnlemma}
Let $\sigma, \rho \subset \Z^n$ be $c_n$-simplices. If $|\sigma| \cap |\rho|\neq \emptyset$, then there is a vertex of $\sigma$ which is $c_n$-adjacent to all vertices of $\rho$. 
\end{lem}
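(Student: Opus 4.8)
The plan is to translate everything into coordinate language and then reduce the lemma to locating a single vertex of $\sigma$ inside an explicit axis-aligned box. First I would record the structure of $c_n$-simplices in $\Z^n$: since $x \adj_{c_n} y$ holds exactly when $x\ne y$ and the coordinates of $x$ and $y$ differ by at most $1$ in every position, a set of mutually $c_n$-adjacent points has all of its $i$-th coordinates lying in a range of length $1$. Hence I may write $\sigma \subseteq \prod_{i=1}^n \{a_i,a_i+1\}$ and $\rho \subseteq \prod_{i=1}^n \{b_i,b_i+1\}$ for suitable integers $a_i,b_i$. Writing $\sigma_i$ and $\rho_i$ for the sets of $i$-th coordinates occurring in $\sigma$ and $\rho$, a point $v$ is $\adjeq_{c_n}$ to every point of $\rho$ precisely when $v$ lies in the box $R = \prod_{i=1}^n [\,\max\rho_i - 1,\ \min\rho_i + 1\,]$, so the lemma amounts to producing a vertex of $\sigma$ inside $R$.

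Next I would exploit the hypothesis $|\sigma|\cap|\rho|\ne\emptyset$ by fixing a point $p$ in the intersection. Because $\rho$ lies in a unit cube, its convex hull satisfies $|\rho|\subseteq R$, so already $p\in R$; projecting to each coordinate shows that $[\min\sigma_i,\max\sigma_i]$ and $[\min\rho_i,\max\rho_i]$ overlap, which forces the two unit cubes to overlap, i.e. $|a_i-b_i|\le 1$. A short case analysis on the relative position of the cubes then determines, coordinate by coordinate, which of the vertex-values $a_i,a_i+1$ are admissible for membership in $R$: whenever the cubes coincide in coordinate $i$ both values are admissible, and the only way a coordinate becomes \emph{forced} to a single value $c_i$ is when the cubes are offset by $1$ and $\rho_i$ uses both of its values, in which case $c_i$ is the value of $\sigma_i$ nearest to $\rho$.

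The main obstacle is the passage from this coordinatewise feasibility to an actual vertex of $\sigma$, since $\sigma$ need not be a full cube and not every combination of admissible coordinate values is realized by a vertex. Here I would use $p$ decisively. In each forced coordinate $i$ the offset of the cubes pins $p_i$ to the extreme value of $\sigma_i$ on the side facing $\rho$: combining $p_i\in[\min\rho_i,\max\rho_i]$ with $p_i\in[\min\sigma_i,\max\sigma_i]\subseteq[a_i,a_i+1]$ yields $p_i = c_i = \max\sigma_i$ (or $\min\sigma_i$). Since $p$ is a convex combination of the vertices of $\sigma$ and attains this extreme coordinate, every vertex appearing with positive weight must have $i$-th coordinate equal to $c_i$. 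Intersecting over all forced coordinates, the set $\sigma' = \{x\in\sigma : x_i = c_i \text{ for every forced } i\}$ is nonempty, and any vertex $v\in\sigma'$ has $v_i=c_i$ in each forced coordinate and $v_i\in\sigma_i$ otherwise; in both situations $v_i$ lies in the $i$-th factor $[\max\rho_i-1,\min\rho_i+1]$ of $R$, so $v\in R$. Thus $v$ is $\adjeq_{c_n}$ to every vertex of $\rho$; choosing $v$ outside $\rho$ when possible upgrades this to strict $c_n$-adjacency, the only exceptional case being when $v$ must coincide with a vertex of $\rho$, which is harmless for the applications since it still yields the $\adjeq_{c_n}$ conclusion that the subsequent Borsuk-Ulam arguments require.
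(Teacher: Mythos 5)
Your proof is correct and takes essentially the same route as the paper's: both arguments confine each $c_n$-simplex to a unit cube, use a point $p\in|\sigma|\cap|\rho|$ to see that the cubes overlap, and then use convexity together with extremality of the pinned coordinates to produce an integer vertex of $\sigma$ lying within distance $1$ of every vertex of $\rho$ in each coordinate. The only differences are in your favor: you spell out the convex-combination/extreme-value step that the paper compresses into ``by construction must contain a vertex from each of $\sigma$ and $\rho$,'' and you correctly flag the degenerate case (e.g.\ $\sigma=\rho$ a single point) in which only $f(x)\adjeq_{c_n}f(-x)$-type conclusions, not strict $c_n$-adjacency, can hold --- an edge case the paper's statement and proof pass over, though, as you note, the weaker $\adjeq_{c_n}$ conclusion is all that Theorem \ref{BUcncn-1} actually uses.
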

\begin{proof}
Since $\sigma$ and $\rho$ are $c_n$-simplices, each is a subset of a unit $n$-box. Let $B$ and $C$ be unit $n$-boxes containing $\sigma$ and $\rho$, respectively. Then we have $|\sigma| \subset |B|$ and $|\rho| \subset |C|$, and so $|\sigma| \cap |\rho| \subset |B| \cap |C|$, and thus $|B| \cap |C| \neq \emptyset$. Since $B$ and $C$ are unit boxes, their intersection $|B|\cap|C|$ is a ``face'' of a box, which by construction must contain a vertex from each of $\sigma$ and $\rho$. Thus there is some vertex $x\in \sigma$ with $x\in B \cap C$. Since $\sigma \subset B$, every vertex of $C$ is $c_n$-adjacent to any vertex of $\rho$. Thus $x$ is $c_n$-adjacent to every vertex of $\rho$ as desired.
\end{proof}

The lemma gives another higher-dimensional Borsuk-Ulam theorem:

\begin{thm}\label{BUcncn-1}
Let $B\subset \Z^{n}$ be an $n$-box, and let $f:\pd B \to \Z^{n-1}$ be $(c_n,c_{n-1})$-continuous. Then there is some point $x\in \pd B$ such that $f(x) \adjeq_{c_{n-1}} f(-x)$. 
\end{thm}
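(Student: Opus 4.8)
The plan is to follow the same template as the proof of Theorem \ref{BUcnc1}, simply replacing the ``shared vertex'' argument available for $c_1$ with the stronger structural statement of Lemma \ref{cnlemma}, now invoked one dimension down. First I would apply Lemma \ref{simplexlem} to obtain a $c_n$-simplex $\sigma \subset \pd B$ with $|f(\sigma)| \cap |f(-\sigma)| \neq \emptyset$. This is the only place the classical Borsuk-Ulam theorem enters, and it is already packaged for us.

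The key observation is that the images $f(\sigma)$ and $f(-\sigma)$ are themselves $c_{n-1}$-simplices in $\Z^{n-1}$. Indeed, $\sigma$ is a set of mutually $c_n$-adjacent points, and the antipodal involution $\tau$ negates each coordinate difference, hence preserves $c_n$-adjacency, so $-\sigma$ is also a $c_n$-simplex. Applying $(c_n,c_{n-1})$-continuity to each pair of distinct points in $\sigma$ (respectively $-\sigma$) shows that their images are pairwise $c_{n-1}$-adjacent-or-equal, which is exactly the statement that $f(\sigma)$ and $f(-\sigma)$ are $c_{n-1}$-simplices.

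Now comes the heart of the argument: I would apply Lemma \ref{cnlemma} \emph{in dimension $n-1$} to the two $c_{n-1}$-simplices $f(\sigma), f(-\sigma) \subset \Z^{n-1}$, whose convex hulls intersect by construction. The lemma is stated for $c_n$-simplices in $\Z^n$, but its proof is purely dimensional and applies verbatim with $n$ replaced by $n-1$; it yields a vertex $y \in f(\sigma)$ that is $c_{n-1}$-adjacent (or equal) to every vertex of $f(-\sigma)$. Choosing any $x \in \sigma$ with $f(x) = y$, we have $-x \in -\sigma$ and hence $f(-x) \in f(-\sigma)$, so $y = f(x)$ is $c_{n-1}$-adjacent to $f(-x)$; that is, $f(x) \adjeq_{c_{n-1}} f(-x)$, the desired conclusion.

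The step I expect to require the most care is the bookkeeping of dimensions, namely making sure that Lemma \ref{cnlemma}, stated for the ambient $\Z^n$, is legitimately reused for $c_{n-1}$-simplices living in $\Z^{n-1}$, together with the verification that continuity really does send $c_n$-simplices to $c_{n-1}$-simplices. The remaining edge case, when $y$ happens to coincide with $f(-x)$, is absorbed harmlessly by the $\adjeq$ relation. No genuinely new idea beyond Lemmas \ref{simplexlem} and \ref{cnlemma} seems necessary.
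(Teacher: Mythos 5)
Your proposal is correct and follows essentially the same argument as the paper's own proof: Lemma \ref{simplexlem} produces the $c_n$-simplex $\sigma$, and then Lemma \ref{cnlemma} is applied to the $c_{n-1}$-simplices $f(\sigma), f(-\sigma) \subset \Z^{n-1}$ to extract the vertex $y$ and the point $x$ with $f(x) \adjeq_{c_{n-1}} f(-x)$. Your explicit care about reusing Lemma \ref{cnlemma} with $n-1$ in place of $n$ is a detail the paper passes over silently, but it is exactly how the paper uses the lemma.
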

\begin{proof}
We use a similar argument to that used for Theorem \ref{BUcnc1}, but this time we use Lemma \ref{cnlemma} in place of the specific arguments concerning $c_1$. 

Again by Lemma \ref{simplexlem} there is a $c_n$-simplex $\sigma$ with $|f(\sigma)| \cap |f(\sigma)| \neq \emptyset$. Then since $f$ is $(c_n,c_{n-1})$-continuous, $f(\sigma)$ and $f(-\sigma)$ are $c_{n-1}$-simplices of $\Z^{n-1}$, and so by Lemma \ref{cnlemma} there is a vertex $y\in f(\sigma)$ which is $c_{n-1}$-adjacent to all vertices of $f(-\sigma)$. Let $x\in \sigma$ be some point with $f(x) = y$, and then we have $f(x) \adjeq_{c_{n-1}} f(-x)$. 
\end{proof}

The proofs used for Theorems \ref{BUcnc1} and \ref{BUcncn-1} generalize directly to any adjacency relation satisfying the following condition: we say an adjacency relation $\lambda$ on $\Z^n$ is \emph{regular} when, if $\sigma$ and $\rho$ are $\lambda$-simplices and $|\sigma| \cap |\rho| \neq \emptyset$, then there is some vertex $x\in \sigma$ which is $\lambda$-adjacent to all points of $\rho$. Lemma \ref{cnlemma} states that $c_n$ is regular as an adjacency on $\Z^n$, and the arguments used in the proof of Theorem \ref{BUcnc1} show that $c_1$ is regular. 

It seems likely that all adjacencies $c_k$ on $\Z^n$ with $k \le n$ should be regular, but we have been unable to prove this, so we state it as a conjecture:
\begin{conj}
The adjacency relation $c_k$ on $\Z^n$ with $k\le n$ is regular.
\end{conj}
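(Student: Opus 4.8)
The natural plan is to imitate the two cases already settled: Lemma~\ref{cnlemma} for $c_n$ and the rigidity argument for $c_1$ inside the proof of Theorem~\ref{BUcnc1}. Since $c_k$-adjacency forces the two points to differ by at most $1$ in every coordinate, every $c_k$-simplex lies in a unit $n$-box; after translating we may regard $\sigma$ and $\rho$ as subsets of the vertex set $\{0,1\}^n$ of a cube (possibly two overlapping cubes), and being a $c_k$-simplex then means pairwise Hamming distance at most $k$. Exactly as in Lemma~\ref{cnlemma}, the hypothesis $|\sigma|\cap|\rho|\neq\emptyset$ forces the two bounding boxes to meet, and the first step would be to locate a vertex $x\in\sigma$ sitting in the overlap region, so that $x$ is within $L^\infty$-distance $1$ of every point of $\rho$ (this is precisely the content of $c_n$-regularity). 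One would then try to refine the choice of $x$ so that, in addition, it differs from each $v\in\rho$ in at most $k$ coordinates.

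The step I expect to be the obstacle is exactly this last one: controlling the number of differing coordinates simultaneously against \emph{every} vertex of $\rho$. For $k=n$ the condition is vacuous (any two points of a common unit box are $c_n$-adjacent), and for $k=1$ the simplices are so rigid---single edges---that a shared vertex does the job; but for $1<k<n$ a vertex of $\sigma$ that lies in the correct box, and is therefore $c_n$-adjacent to all of $\rho$, may still fail to be $c_k$-adjacent to some $v\in\rho$ by disagreeing with it in more than $k$ positions, and there is no evident mechanism forcing some single choice of $x$ to beat this bound against all of $\rho$ at once.

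In fact this obstacle turns out to be fatal, and pursuing it produces a counterexample: the conjecture is \emph{false} as stated. Work in the unit cube $\{0,1\}^3\subset\Z^3$ and take the two ``parity tetrahedra''
\[ \rho = \{(0,0,0),(1,1,0),(1,0,1),(0,1,1)\}, \qquad \sigma = \{(1,0,0),(0,1,0),(0,0,1),(1,1,1)\}. \]
Any two distinct vertices of the same parity differ in exactly two coordinates, so $\sigma$ and $\rho$ are genuine $c_2$-simplices. Both sets have centroid $(\tfrac12,\tfrac12,\tfrac12)$, so $|\sigma|$ and $|\rho|$ meet at the cube's center, verifying the hypothesis of regularity. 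However, each vertex $x\in\sigma$ has its cube-antipode $(1,1,1)-x$ lying in $\rho$, and $x$ disagrees with that antipode in all three coordinates; hence $x$ is not $c_2$-adjacent to $(1,1,1)-x\in\rho$. Thus \emph{no} vertex of $\sigma$ is $c_2$-adjacent to every point of $\rho$, so $c_2$ is not regular on $\Z^3$. Consequently the right move is not to prove the conjecture but to weaken it---for instance to the extreme adjacencies $k\in\{1,n\}$, which is all that Theorems~\ref{BUcnc1} and~\ref{BUcncn-1} actually use; regularity appears to be a special feature of these two endpoints rather than of every $c_k$.
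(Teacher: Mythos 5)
There is no proof in the paper to compare against: the author records this statement only as a conjecture, explicitly saying he was unable to prove it. Your proposal does something genuinely different---it refutes the statement---and your counterexample is correct. The two parity classes $\sigma$ and $\rho$ of $\{0,1\}^3$ are indeed $c_2$-simplices, since distinct cube vertices with the same coordinate-sum parity differ in exactly two coordinates; their convex hulls are the two inscribed tetrahedra of the cube, which meet (both contain the centroid $(\tfrac12,\tfrac12,\tfrac12)$); and every $x\in\sigma$ has its antipode $(1,1,1)-x$ lying in $\rho$ at Hamming distance $3$, so no vertex of $\sigma$ is $c_2$-adjacent to all of $\rho$. Since $\sigma\cap\rho=\emptyset$, the failure persists under the weaker ``adjacent or equal'' reading of regularity, and by symmetry exchanging the roles of $\sigma$ and $\rho$ does not help. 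So $c_2$ is not regular on $\Z^3$, and the conjecture is false as stated.

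Two remarks on scope. First, nothing already proved in the paper breaks: Theorems \ref{BUcnc1} and \ref{BUcncn-1} use only the regularity of $c_1$ and of $c_n$, and your example (at $k=2$, $n=3$) is consistent with regularity holding at those two endpoints. Second, your refutation closes off the paper's proposed \emph{mechanism} for intermediate $k$, but not its goal: regularity is a sufficient condition for the simplex argument, not a necessary one, and the pairs $f(\sigma)$, $f(-\sigma)$ produced by Lemma \ref{simplexlem} are images of $c_n$-simplices under a continuous map, which are far more constrained than arbitrary intersecting $c_k$-simplices such as your parity tetrahedra. So whether a Borsuk--Ulam conclusion holds for $(c_n,c_k)$-continuous maps with $1<k<n$ remains open; what you have shown is that it cannot be obtained by the regularity route, and that the conjecture should be replaced by the (now answered in the negative, except at the endpoints $k\in\{1,n\}$) question of which $c_k$ are regular.
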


Subject to the conjecture above we can expect a nice Borsuk-Ulam result for $(c_n,c_k)$-continuous functions on $n$-boxes with $k<n$. It is natural to ask if this type of result can hold for $(c_l,c_k)$-continuous functions with both $l<n$ and $k<n$. The adjacency conclusion from Theorems \ref{BUcnc1} and \ref{BUcncn-1} will not hold in this more general case. The following example shows a $\Z^2$-valued function on the boundary of a $3$-box which is $(c_1,c_1)$-continuous but has no point $x$ with $f(x) \adjeq_{c_1} f(-x)$. 

\begin{figure}
\[ 
\begin{tikzpicture}[scale=.5]
	\filldraw[fill=white, xshift=-2cm,yshift=-2cm]
		(45:1.2) \foreach \x in {135,225,315,45} { -- (\x:1.2) };
	\filldraw[fill=white, xshift=-2cm,yshift=0cm]
		(45:1.2) \foreach \x in {135,225,315,45} { -- (\x:1.2) };
	\filldraw[fill=white, xshift=-2cm,yshift=2cm]
		(45:1.2) \foreach \x in {135,225,315,45} { -- (\x:1.2) };
	\filldraw[fill=white, xshift=0cm,yshift=-2cm]
		(45:1.2) \foreach \x in {135,225,315,45} { -- (\x:1.2) };
	\filldraw[fill=white, xshift=0cm,yshift=0cm]
		(45:1.2) \foreach \x in {135,225,315,45} { -- (\x:1.2) };
	\filldraw[fill=white, xshift=0cm,yshift=2cm]
		(45:1.2) \foreach \x in {135,225,315,45} { -- (\x:1.2) };
	\filldraw[fill=white, xshift=2cm,yshift=-2cm]
		(45:1.2) \foreach \x in {135,225,315,45} { -- (\x:1.2) };
	\filldraw[fill=white, xshift=2cm,yshift=0cm]
		(45:1.2) \foreach \x in {135,225,315,45} { -- (\x:1.2) };
	\filldraw[fill=white, xshift=2cm,yshift=2cm]
		(45:1.2) \foreach \x in {135,225,315,45} { -- (\x:1.2) };
	\node () at (-2cm,-2cm) {$(0,1)$};
	\node () at (-2cm,0cm) {$(0,0)$};
	\node () at (-2cm,2cm) {$(0,0)$};
	\node () at (0cm,-2cm) {$(0,0)$};
	\node () at (0cm,0cm) {$(0,0)$};
	\node () at (0cm,2cm) {$(0,0)$};
	\node () at (2cm,-2cm) {$(0,0)$};
	\node () at (2cm,0cm) {$(0,0)$};
	\node () at (2cm,2cm) {$(0,0)$};
\end{tikzpicture}
\qquad
\begin{tikzpicture}[scale=.5]
	\filldraw[fill=white, xshift=-2cm,yshift=-2cm]
		(45:1.2) \foreach \x in {135,225,315,45} { -- (\x:1.2) };
	\filldraw[fill=white, xshift=-2cm,yshift=2cm]
		(45:1.2) \foreach \x in {135,225,315,45} { -- (\x:1.2) };
	\filldraw[fill=white, xshift=0cm,yshift=-2cm]
		(45:1.2) \foreach \x in {135,225,315,45} { -- (\x:1.2) };
	\filldraw[fill=white, xshift=0cm,yshift=2cm]
		(45:1.2) \foreach \x in {135,225,315,45} { -- (\x:1.2) };
	\filldraw[fill=white, xshift=2cm,yshift=-2cm]
		(45:1.2) \foreach \x in {135,225,315,45} { -- (\x:1.2) };
	\filldraw[fill=white, xshift=2cm,yshift=2cm]
		(45:1.2) \foreach \x in {135,225,315,45} { -- (\x:1.2) };
	\filldraw[fill=white, xshift=-2cm,yshift=0cm]
		(45:1.2) \foreach \x in {135,225,315,45} { -- (\x:1.2) };
	\filldraw[fill=white, xshift=2cm,yshift=0cm]
		(45:1.2) \foreach \x in {135,225,315,45} { -- (\x:1.2) };
	\node () at (-2cm,-2cm) {$(1,1)$};
	\node () at (-2cm,2cm) {$(0,1)$};
	\node () at (0cm,-2cm) {$(1,0)$};
	\node () at (0cm,2cm) {$(0,1)$};
	\node () at (2cm,-2cm) {$(1,0)$};
	\node () at (2cm,2cm) {$(0,0)$};
	\node () at (-2cm,0cm) {$(0,1)$};
	\node () at (2cm,0cm) {$(1,0)$};
\end{tikzpicture}
\qquad
\begin{tikzpicture}[scale=.5]
	\filldraw[fill=white, xshift=-2cm,yshift=-2cm]
		(45:1.2) \foreach \x in {135,225,315,45} { -- (\x:1.2) };
	\filldraw[fill=white, xshift=-2cm,yshift=0cm]
		(45:1.2) \foreach \x in {135,225,315,45} { -- (\x:1.2) };
	\filldraw[fill=white, xshift=-2cm,yshift=2cm]
		(45:1.2) \foreach \x in {135,225,315,45} { -- (\x:1.2) };
	\filldraw[fill=white, xshift=0cm,yshift=-2cm]
		(45:1.2) \foreach \x in {135,225,315,45} { -- (\x:1.2) };
	\filldraw[fill=white, xshift=0cm,yshift=0cm]
		(45:1.2) \foreach \x in {135,225,315,45} { -- (\x:1.2) };
	\filldraw[fill=white, xshift=0cm,yshift=2cm]
		(45:1.2) \foreach \x in {135,225,315,45} { -- (\x:1.2) };
	\filldraw[fill=white, xshift=2cm,yshift=-2cm]
		(45:1.2) \foreach \x in {135,225,315,45} { -- (\x:1.2) };
	\filldraw[fill=white, xshift=2cm,yshift=0cm]
		(45:1.2) \foreach \x in {135,225,315,45} { -- (\x:1.2) };
	\filldraw[fill=white, xshift=2cm,yshift=2cm]
		(45:1.2) \foreach \x in {135,225,315,45} { -- (\x:1.2) };
	\node () at (-2cm,-2cm) {$(1,1)$};
	\node () at (-2cm,0cm) {$(1,1)$};
	\node () at (-2cm,2cm) {$(1,1)$};
	\node () at (0cm,-2cm) {$(1,1)$};
	\node () at (0cm,0cm) {$(1,1)$};
	\node () at (0cm,2cm) {$(1,1)$};
	\node () at (2cm,-2cm) {$(1,1)$};
	\node () at (2cm,0cm) {$(1,1)$};
	\node () at (2cm,2cm) {$(1,0)$};
\end{tikzpicture}
\]
\caption{A $(c_1,c_1)$-continuous function $f:\pd([-1,1]^3_\Z) \to \Z^2$ without any point $f(x)\adjeq f(-x)$. The printed coordinates show the values of $f$ at each pixel. At left is the ``bottom layer'' $[-1,1]_\Z^2 \times \{-1\}$, at right is the ``top layer'' $[-1,1]_\Z^2 \times \{1\}$, and in the center is the ``middle layer'' $[-1,1]_\Z^2 \times \{0\}$. \label{exlfig}}
\end{figure}
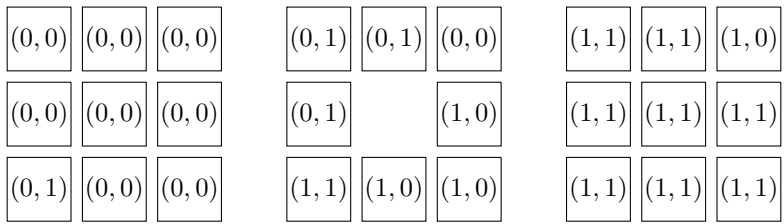
\begin{exl}
Let $B$ be the $3$-box given by $[-1,1]_\Z^3$. Then 
$\pd B = B - \{(0,0)\}$. Let $f:\pd B \to \Z^2$ be given as follows: $f(-1,-1,-1) = (0,1)$; $f(x,y,-1) = (0,0)$ for all other $x,y$; $f(-1,-1,0) = (1,1)$; $f(1,1,0) = (0,0)$; $f(x,y,0) = (0,1)$ if $y>x$ and $f(x,y,0) = (1,0)$ if $y<x$; $f(1,1,1) = (1,0)$, and $f(x,y,1)=(1,1)$ for all other $x,y$. See Figure \ref{exlfig}.

It is easy to check that $f$ is $(c_1, c_1)$-continuous, but there is no point $x$ with $f(x)\adjeq_{c_1} f(-x)$. Note that $f$ is not $(c_2,c_1)$- or $(c_3,c_1)$-continuous, since $(1,0,0)$ and $(0,1,0)$ are $c_2$- and $c_3$-adjacent, but $f(1,0,0) = (1,0)$ and $f(0,1,0) = (0,1)$ are not $c_1$-adjacent. 
\end{exl}

As we have seen in Section \ref{exlsection}, for many important examples the function $f$ will not be continuous, but rather have some Lipschitz constant $m$. Equivalently, $f$ will be $(c_l,c_k^m)$-continuous for some $m>1$. This case presents new obstacles, and there is considerable opportunity for further work. In particular we pose the following question:
\begin{question}
Given an $n$-box $B$ and a function $f:\pd B \to \Z^{n-1}$ which is $(c_l,c_k^m)$-continuous for positive integers $m$ and $k<n$ and $l\le n$, what constant $C = C(l,k,m)$ can be chosen which guarantees a point $x\in \pd B$ with $f(x) \adjeq_{c_k^C} f(-x)$?
\end{question}
We have answered the above question only in some restricted cases. By Theorem \ref{BUdim1}, in the case $n=2$, $k=1$, and $l\in \{1,2\}$ we can take $C = 2m-1$. By Theorem \ref{BUcnc1m}, for any $n$ in the case $l=n$, $k=1$ we can take $C=2m$. We believe these values of $C$ (especially the latter) may be able to be improved (reduced), which is also an interesting question.


\end{document}